
\documentclass{amsart}

\usepackage[inactive]{srcltx} 
\usepackage{cite}              
\usepackage{amsmath, amsthm, amscd, amsfonts, amssymb}

 \newtheorem{thm}{Theorem}[section]

 \newtheorem{prop}[thm]{Proposition}
 \theoremstyle{definition}
 
 \newtheorem{rem}[thm]{Remark}
 \newtheorem{exm}[thm]{Example}
 \theoremstyle{remark}
 
\newcommand{\A}{\mathcal{A}}      
\newcommand{\U}{\mathcal{U}}

\DeclareMathOperator{\id}{id}

\begin{document}

\nocite{*}

\title[On automatic continuity of the derivations on module extension of Banach algebras]{On automatic continuity of the derivations on module extension of Banach algebras}
\author{ H. Farhadi }
\thanks{{\scriptsize
\hskip -0.4 true cm \emph{MSC(2010)}:  16E40; 46H40; 46H25.
\newline \emph{Keywords}: Module extension of Banach algebras, Derivation, Automatic continuity.\\}}

\address{Department of
Mathematics, University of Kurdistan, P. O. Box 416, Sanandaj,
Iran.}
\email{h.farhadi@sci.uok.ac.ir}
\maketitle
\begin{abstract}
In this article we shall focus on the derivations on module extension of Banach algebras and determine the general structure of them. Then we obtain some results concerning the automatic continuity of these mappings.
\end{abstract}
\section{Introduction}
Let $\A$ be a Banach algebra (over the complex field $\mathbb{C}$), and $\U$ be a Banach $\A$-bimodule. A linear mapping $D:\A\rightarrow\U$ is called a \textit{derivation} if $D (ab)=a D(b)+D (a)b$ holds for all $a,b\in\A$. If $\U=\A$, then $D$ is said to be a derivation on Banach algebra $\A$. For any $u\in\U$, the mapping $\id_u:\A\rightarrow\U$ given by $\id_u (a)=au-ua$ is a continuous derivation called \textit{inner derivation} induced by $u$. Derivations and their various properties are significant subjects in the study of Banach algebras. One of the most important problems related to these mappings is this question; Under what conditions is a derivation $D:\A\rightarrow\U$ continuous? This question lies in the theory of automatic continuity which is an important subject in mathematical analysis and  has attracted the attention of researchers during the last few decades. In this theory we are looking for conditions under which one can conclude that a linear mapping between two Banach algebras (or two Banach spaces, in general) is necessarily continuous. For more information, the reader may refer to \cite{da} which is a detailed source in this context. Here we mention the most important established results in this regard. A celebrated theorem due to Johnson and Sinclair \cite{john1} states that every derivation on a semisimple Banach algebra is continuous. Later on, Ringrose in \cite{rin} showed that every derivation from a $C^{*}$-algebra $\A$ into Banach $\A$-bimodule $\U$ is continuous. In \cite{chr}, Christensen proved that every derivation from nest algebra on Hilbert space $\mathcal{H}$ into $\mathbb{B}(\mathcal{H})$ is continuous. Additionally, some results on automatic continuity of the derivations on prime Banach algebras have been established by Villena in \cite{vi1} and \cite{vi2}. See also \cite{gh2, gh8} where the continuity of the derivations are studied on certain rpoducts of Banach algebras. Some results on  the derivations and Jordan derivations on  trivial extension and triangular Banach algebras have been established by  in a number of papers; see \cite{gh1, gh3, gh4, gh6, gh7, gh8, gh9, gh10, gh11, gh12, gh13}
 
 Let $\A$ be a Banach algebra and $\U$ be a Banach  $\A$-bimodule. The \textit{module extension or trivial extension Banach algebra} associated to $\A$ and $\U$, denoted by $T(\A,\U)$, is the $\ell^1$-direct sum $\A\oplus\U$ equipped with the algebra multiplication given by 
\[(a,u)(b,v)=(ab,av+ub)\quad\quad (a,b\in\A, u,v\in\U).\]
and the norm
\[||(a,u)||=||a||+||u|| \quad \quad (a\in\A,u\in\U).\] 
Many authors have studied this class of Banach algebras from different points of view and investigated various properties of these spaces; see for example \cite{med, zh}.
\par 
At the end of this section we introduce some notations and expressions used in the paper.
Let $\A$ be a Banach algebra and $\U, \mathcal{V}$ be  Banach $\A$-bimodules. The annihilator of $\U$ over $\A$ is defined as
\[ann_{\A}\U=\{a\in\A\, \mid \, a\U=\U a=(0)\}.\]
Recall that a linear mapping $\phi :\U\rightarrow \mathcal{V}$ is said to be a \textit{left $\A$-module homomorphism} if $\phi (au)=a\phi (u)$ whenever $a\in \A$ and $u\in\U$ and it is a \textit{right $\A$-module homomorphism} if $\phi (ua)=\phi (u) a\quad (a\in \A,u\in\U)$. $\phi$ is called \textit{$\A$-module homomorphism}, if $ \phi$ is both left and right $\A$-module homomorphism. The set of all continuous $\A$-module homomorphisms from $\U$ into $\mathcal{V}$ is denoted by $Hom_{\A}(\U,\mathcal{V})$.
\section{Continuity of the derivations }
Throughout this section $\A\, ,\U$ are Banach algebras and $T(\A,\U)$ denotes the corresponding module extension Banach algebra. We commence with the following proposition characterizing the derivations on $T(\A,\U)$.
\begin{prop}\label{asli}
Let $D:T(\A,\U)\rightarrow T(\A,\U)$ be a linear mapping such that 
 \[D((a,u))=(\delta_1(a)+\tau_1(u),\delta_2(a)+\tau_2(u))\quad\quad ((a,u)\in \A\times\U)\] in which $\delta_1:\A\rightarrow \A$, $\delta_2:\A\rightarrow \U$, $\tau_1:\U\rightarrow \A$ and $\tau_{2}:\U\rightarrow \U$ are linear mappings. Then the following are equivalent.
 \begin{enumerate}
 \item[(i)]
 $D$ is a derivation. 
 \item[(ii)]
 $\delta_1:\A\rightarrow \A$, $\delta_2:\A\rightarrow \U$ are derivations, $\tau_{2}:\U\rightarrow \U$ is a linear mapping satisfying 
 \[\tau_2 (au)=a\tau_2 (u)+\delta_2 (a)u\quad \text{and} \quad \tau_2 (ua)=\tau_2 (u)a+u\delta_2(a)\quad \quad (a\in\A,u\in\U).\] 
 $\tau_1:\U\rightarrow \A$ is an $\A$-bimodule homomorphism for which $u\tau_1(v)+\tau_1(u)v=0$ $ (u,v\in\U)$. 
\end{enumerate}
 As a consequence, any derivation $D:T(\A,\U)\rightarrow T(\A,\U)$ can be written as 
 $D=D_1+D_2$ where $D_1((a,u))=(\delta_1(a)+\tau_1(u),\tau_2(u))$ and $D_{2}((a,u))=(0,\delta_2(a))$ are derivations on $T(\A,\U)$.
 Moreover, $D$ is an inner derivation if and only if $\delta_1 ,\delta_2$ are inner derivations and $\tau_1 =0$.
 \end{prop}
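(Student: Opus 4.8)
The plan is to reduce the assertion to the defining identity $D(xy)=xD(y)+D(x)y$ and then run an elementary coordinate computation on the splitting $T(\A,\U)=\A\oplus\U$. First I would observe that both sides of the derivation identity are bilinear in $(x,y)$, so it suffices to test it on $x=(a,u)$ and $y=(b,v)$ with $a,b\in\A$, $u,v\in\U$ arbitrary. Substituting the given expression for $D$ together with $(a,u)(b,v)=(ab,av+ub)$ and expanding, one splits the resulting identity into its $\A$-component, call it $(\ast)$, and its $\U$-component, call it $(\ast\ast)$; each must hold for all $a,b,u,v$. Everything after that is reading off special cases of $(\ast)$ and $(\ast\ast)$ and, for the converse, reassembling them.

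For (i)$\Rightarrow$(ii): putting $u=v=0$ in $(\ast)$ and in $(\ast\ast)$ collapses each to the Leibniz rule, so $\delta_1\colon\A\to\A$ and $\delta_2\colon\A\to\U$ are derivations. Putting $a=b=0$ in $(\ast\ast)$ leaves only $u\tau_1(v)+\tau_1(u)v=0$. Feeding these facts back into $(\ast)$ and cancelling the Leibniz terms reduces it to $\tau_1(av)+\tau_1(ub)=a\tau_1(v)+\tau_1(u)b$; setting $u=0$, then $v=0$, shows $\tau_1$ is a left, resp.\ right, $\A$-module homomorphism, hence an $\A$-bimodule homomorphism. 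Likewise, after removing the Leibniz terms and the terms $u\tau_1(v)+\tau_1(u)v$, the identity $(\ast\ast)$ reduces to a two-variable relation in $\tau_2$ which, on setting $u=0$ and then $v=0$, produces exactly the two compatibility identities for $\tau_2$ in (ii). For (ii)$\Rightarrow$(i) one simply inserts all the hypotheses of (ii) into the expanded forms of $(\ast)$ and $(\ast\ast)$ and checks that both hold term by term.

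The decomposition is then immediate from the equivalence: $D_1$ corresponds to the data $(\delta_1,\tau_1,\tau_2)$ with vanishing $\A\to\U$ part, and $D_2$ to the data with $\delta_1,\tau_1,\tau_2$ all zero and $\A\to\U$ part equal to $\delta_2$; each such quadruple satisfies (ii) — for $D_1$ because $\delta_1$ is a derivation, the zero map is trivially a derivation, and $\tau_1,\tau_2$ keep their relations; for $D_2$ because $\delta_2$ is a derivation while the conditions imposed on the vanishing maps hold vacuously — so $D_1$ and $D_2$ are derivations, and $D=D_1+D_2$. For the inner-derivation statement I would compute
\[
\id_{(b,w)}((a,u))=(a,u)(b,w)-(b,w)(a,u)=(ab-ba,\ (aw-wa)+(ub-bu)),
\]
and compare with the normal form of $D$: this shows $D=\id_{(b,w)}$ precisely when $\delta_1=\id_b$, $\delta_2=\id_w$, $\tau_1=0$ (and $\tau_2(u)=ub-bu$), which yields both directions.

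The work here is essentially bookkeeping, and the main obstacle is organizational: in the expansions $(\ast)$ and $(\ast\ast)$ one must keep straight which of the three underlying bilinear operations — multiplication in $\A$, the left action of $\A$ on $\U$, and the right action of $\A$ on $\U$ — each term uses, so that the specializations $u=v=0$, $a=b=0$, $u=0$, $v=0$ genuinely decouple the four unknown maps $\delta_1,\delta_2,\tau_1,\tau_2$. The one genuinely delicate point is the ``if'' direction of the inner-derivation claim, where after subtracting $\id_{(b,w)}$ one is left with a derivation all of whose components vanish except possibly an $\A$-bimodule homomorphism $\U\to\U$ sitting in the $\tau_2$-slot, and this residual term must be dealt with.
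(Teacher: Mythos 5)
Your strategy (expand $D((a,u)(b,v))=(a,u)D((b,v))+D((a,u))(b,v)$, split into $\A$- and $\U$-components, and specialize) is exactly the ``straightforward verification'' the paper has in mind, and your treatment of $\delta_1,\delta_2,\tau_1$ and of the decomposition is the right bookkeeping. However, one of your read-offs does not match what the computation actually gives. After removing the Leibniz terms and the $u\tau_1(v)+\tau_1(u)v$ terms, the $\U$-component reduces to $\tau_2(av)+\tau_2(ub)=a\tau_2(v)+\delta_1(a)v+\tau_2(u)b+u\delta_1(b)$, so the compatibility identities for $\tau_2$ are $\tau_2(au)=a\tau_2(u)+\delta_1(a)u$ and $\tau_2(ua)=\tau_2(u)a+u\delta_1(a)$ --- with $\delta_1$, not $\delta_2$. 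Indeed a term such as $\delta_2(a)u$ would be a product of two elements of $\U$, and no such product ever arises when multiplying out in $T(\A,\U)$. So your claim that the specializations produce ``exactly'' the identities in (ii) is not accurate for (ii) as printed; the printed condition is a typo (compare Proposition \ref{tau} and Remarks \ref{r1}, \ref{r2}, which use the $\delta_1$-version), and the typo matters for your own argument: with the printed (ii), $D_2((a,u))=(0,\delta_2(a))$ would have to satisfy $0=\delta_2(a)u$, whereas with the $\delta_1$-version both $D_1$ and $D_2$ are derivations exactly as you say. You should derive and state the corrected identities rather than assert agreement with the printed ones.

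The second issue is the one you flag yourself and then leave open: the ``if'' half of the inner-derivation claim. Your comparison with $\id_{(b,w)}$ proves only the ``only if'' direction. From $\delta_1=\id_b$, $\delta_2=\id_w$, $\tau_1=0$ one gets that $\sigma(u):=\tau_2(u)-(ub-bu)$ is an $\A$-bimodule homomorphism of $\U$, and nothing forces $\sigma=0$, nor can it be absorbed by changing the implementing element. In fact this direction fails as stated: on $T(\mathbb{C},\mathbb{C})$ (the dual numbers) the map $D((a,u))=(0,u)$ is a derivation with $\delta_1=\delta_2=0$ (inner) and $\tau_1=0$, yet $T(\mathbb{C},\mathbb{C})$ is commutative, so its only inner derivation is $0$. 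Thus the ``residual term'' cannot be dealt with in general; the correct assertion is that $D$ is inner if and only if there exists $(b,w)$ with $\delta_1=\id_b$, $\delta_2=\id_w$, $\tau_1=0$ and $\tau_2(u)=ub-bu$ for all $u$, i.e.\ the condition on $\tau_2$ must be added. Note also that your sentence ``which yields both directions'' contradicts your own closing caveat; the write-up should either prove the amended statement or record a counterexample of the above kind.
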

 \begin{proof}
 The proof follows from a straightforward verification. 
 \end{proof}
\begin{rem} \label{r0} 
Note that one can show that the linear mapping $\delta:\A\rightarrow\U$ is a derivation if and only if  $D:T(\A,\U)\rightarrow T(\A,\U)$ given by $D((a,u))=(0,\delta (a))$ is a derivation. So if any derivation on $T(\A,\U)$ is continuous, then any derivation $\delta:\A\rightarrow\U$ is continuous as well.
 \end{rem} 
 In the next proposition we give some sufficient conditions for derivations on $\A$ be continuous. 
 \begin{prop}\label{tau}
Suppose that there are $\A$-bimodule homomorphisms $\phi:\A\rightarrow\U$ and $\psi:\U\rightarrow\A$ such that $\phi o\psi = I_{\U}$ ($I_\U$ is the identity mapping on $\U$). If every derivation on $T(\A,\U)$ is continuous, then every derivation on $\A$ is continuous.
\end{prop}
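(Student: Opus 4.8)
The plan is to reduce the statement to the hypothesis by manufacturing, out of an arbitrary derivation $d:\A\to\A$, a single derivation on $T(\A,\U)$ whose ``first coordinate'' is $d$ itself. So fix a derivation $d:\A\to\A$, not assumed continuous, and define a linear map $D:T(\A,\U)\to T(\A,\U)$ by
\[D((a,u))=\bigl(\,d(a),\ \phi(d(\psi(u)))\,\bigr)\qquad(a\in\A,\ u\in\U).\]
This is well defined and linear because $d$, $\phi$, $\psi$ are. First I would verify that $D$ is a derivation on $T(\A,\U)$. A clean way to see this: the maps $\Phi:T(\A,\U)\to T(\A,\A)$, $\Phi((a,u))=(a,\psi(u))$, and $\Psi:T(\A,\A)\to T(\A,\U)$, $\Psi((a,x))=(a,\phi(x))$, are algebra homomorphisms (immediate from $\phi,\psi$ being $\A$-bimodule homomorphisms), and $\Psi\circ\Phi$ is the identity on $T(\A,\U)$, which is exactly the hypothesis $\phi\circ\psi=I_\U$; moreover $\tilde d:T(\A,\A)\to T(\A,\A)$, $\tilde d((a,x))=(d(a),d(x))$, is readily checked to be a derivation, and $D=\Psi\circ\tilde d\circ\Phi$. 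A short computation — using only that $\Phi,\Psi$ are algebra homomorphisms with $\Psi\circ\Phi=\mathrm{id}$ — shows that such a compression of a derivation is again a derivation. (Equivalently, one can appeal to Proposition \ref{asli} with $\delta_1=d$, $\delta_2=0$, $\tau_1=0$ and $\tau_2=\phi\circ d\circ\psi$.)

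The only point that is not purely formal, and the one place where $\phi\circ\psi=I_\U$ is really used, is the identity $\tau_2(au)=a\tau_2(u)+d(a)u$ (and the symmetric $\tau_2(ua)=\tau_2(u)a+ud(a)$) for $\tau_2=\phi\circ d\circ\psi$. Expanding $\tau_2(au)=\phi(d(\psi(au)))$ one uses, in order, that $\psi$ is a left $\A$-module homomorphism ($\psi(au)=a\psi(u)$), that $d$ is a derivation on $\A$ ($d(a\psi(u))=a\,d(\psi(u))+d(a)\psi(u)$), and that $\phi$ is a left $\A$-module homomorphism; the surviving cross term $\phi(d(a)\psi(u))$ then collapses to $d(a)\,\phi(\psi(u))=d(a)u$ precisely because $\phi\circ\psi=I_\U$. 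The right-handed identity is handled symmetrically with the right module structures, so the full $\A$-bimodule assumption on $\phi$ and $\psi$ is needed, matching the hypothesis. I do not anticipate any genuine obstacle here; the only thing to watch is bookkeeping — at each step distinguishing the algebra product in $\A$ from the one-sided module actions of $\A$ on $\U$ — and confirming that $D$ indeed maps into $T(\A,\U)$ and is linear.

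With $D$ known to be a derivation on $T(\A,\U)$, the hypothesis applies: $D$ is continuous, so there is $M>0$ with $\|D(x)\|\le M\|x\|$ for all $x\in T(\A,\U)$. Since $\psi(0)=0$ and $\phi(0)=0$ we have $D((a,0))=(d(a),0)$, and from the norm $\|(a,u)\|=\|a\|+\|u\|$ we get $\|d(a)\|=\|D((a,0))\|\le M\|(a,0)\|=M\|a\|$ for every $a\in\A$. Hence $d$ is continuous, and as $d$ was an arbitrary derivation on $\A$, the proof is complete.
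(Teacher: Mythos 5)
Your proof is correct and is essentially the paper's own argument: both lift the given derivation $d$ on $\A$ to the derivation $(a,u)\mapsto\bigl(d(a),\phi(d(\psi(u)))\bigr)$ on $T(\A,\U)$ (the paper sets $\tau=\phi\circ d\circ\psi$ and $D((a,x))=(d(a),\tau(x))$) and then invokes the assumed continuity of derivations on $T(\A,\U)$ to conclude that $d$ is continuous. Your presentation of the verification as a compression $D=\Psi\circ\tilde{d}\circ\Phi$ of a derivation on $T(\A,\A)$, and your explicit final estimate via $D((a,0))=(d(a),0)$, are just tidier write-ups of the same steps.
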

\begin{proof}
Let $\delta$ be a derivation on $\A$. Define the map $\tau:\U\rightarrow\U$ by $\tau =\phi o\delta o\psi$.
Then for every $a\in\A,x\in\U$,
\[\tau(ax)=a\tau(x)+\delta(a)x\quad \text{and} \quad \tau(xa)=\tau(x)a+x\delta(a).\]
So the map $D:T(\A,\U)\rightarrow T(\A,\U)$ defined by $D((a,x))=(\delta(a),\tau(x))$ is a derivation which is continuous by the hypothesis. Thus $\delta$ is continuous.
\end{proof}
\begin{rem}\label{r1}
If $\A$ is a Banach algebra and $\mathcal{I}$ is a closed ideal on it, then $\frac{\A}{\mathcal{I}}$ is a Banach $\A$-bimodule and so we can consider $T(\A, \frac{\A}{\mathcal{I}})$. Suppose that $\A$ possesses a bounded right (or left) approximate identity and every derivation on $\A$ and every derivation from $\A$ to $\frac{\A}{\mathcal{I}}$ is continuous. Let $D$ be a derivation on $T(\A, \frac{\A}{\mathcal{I}})$ which has a structure as in Proposition \ref{asli}. Then $\tau_1:\frac{\A}{\mathcal{I}}\rightarrow \A$ is an $\A$-bimodule homomorphism. Since $\A$ has a bounded right(left) approximate identity, then so does $\frac{\A}{\mathcal{I}}$. Hence $\tau_1$ is continuous implying that $D$ is continuous. So in this case any derivation on $T(\A, \frac{\A}{\mathcal{I}})$ is continuous.
\end{rem}
\begin{rem}\label{r2}
If $\mathcal{I}$ is a closed ideal in a Banach algebra $\A$ and $\delta:\A\rightarrow\A$ is a derivation such that $\delta(\mathcal{I})\subseteq \mathcal{I}$, then the mapping $\tau: \frac{\A}{\mathcal{I}}\rightarrow \frac{\A}{\mathcal{I}}$ defined by $\tau(a+\mathcal{I})=\delta(a)+\mathcal{I}$ is well-defined and linear and 
\[\tau (a(u+\mathcal{I}))=a\tau (u+\mathcal{I})+\delta (a)(u+\mathcal{I})\quad,\quad \tau((u+\mathcal{I})a)=\tau(u+\mathcal{I})a+(u+\mathcal{I})\delta(a).\]
 Therefore the mapping $D$ on  $T(\A, \frac{\A}{\mathcal{I}})$ defined by $D((a,u))=(\delta (a),\tau(u+\mathcal{I}))$ is a derivation. So if every derivation on  $T(\A, \frac{\A}{\mathcal{I}})$ is continuous, then every derivation $\delta:\A\rightarrow \A $ with $\delta(\mathcal{I})\subseteq \mathcal{I}$ is continuous.
\end{rem}
 In Remarks \ref{r1} and \ref{r2}, if we let $\mathcal{I}=(0)$, then we have the following proposition.
 \begin{prop}
  Let $\A$ be a Banach algebra.
\begin{enumerate}
\item[(i)] If $\A$ has a right (left) approximate identity and every derivation on $\A$ is continuous, then any derivation on $T(\A, \A)$ is continuous. 
\item[(ii)] If every derivation on $T(\A, \A)$ is continuous, then any derivation on $\A$ is continuous.
\end{enumerate} 
 \end{prop}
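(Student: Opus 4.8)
The plan is to obtain both assertions as the special case $\mathcal{I}=(0)$ of Remarks \ref{r1} and \ref{r2}, after noting the simplifications that take place when $\mathcal{I}=(0)$: then $\A/\mathcal{I}$ is just the Banach $\A$-bimodule $\A$, so $T(\A,\A/\mathcal{I})=T(\A,\A)$; a derivation from $\A$ into $\A/\mathcal{I}$ is exactly a derivation on $\A$; and the condition $\delta(\mathcal{I})\subseteq\mathcal{I}$ is automatically satisfied.

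For (i) I would run the argument of Remark \ref{r1} with $\mathcal{I}=(0)$. Given a derivation $D$ on $T(\A,\A)$, Proposition \ref{asli} writes it through maps $\delta_1,\delta_2:\A\to\A$, $\tau_1:\A\to\A$ and $\tau_2:\A\to\A$; here $\delta_1$ and $\delta_2$ are derivations on $\A$, hence continuous by hypothesis, while $\tau_1$ is an $\A$-bimodule homomorphism of $\A$ into $\A$. Since $\A$ has a one-sided approximate identity, this homomorphism $\tau_1$ is forced to be continuous, and then, exactly as in Remark \ref{r1}, one deduces that $D$ is continuous. The real obstacle in (i) is precisely this step --- the automatic continuity of the module homomorphism $\tau_1$ --- since that is the only place where the existence of an approximate identity is genuinely used; the rest is bookkeeping around Proposition \ref{asli}.

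For (ii) I would apply Remark \ref{r2} with $\mathcal{I}=(0)$. Let $\delta$ be any derivation on $\A$; the (now vacuous) hypothesis $\delta(\mathcal{I})\subseteq\mathcal{I}$ holds, so that remark furnishes the derivation $D$ on $T(\A,\A)$ given by $D((a,u))=(\delta(a),\delta(u))$, and one checks directly, as carried out in Remark \ref{r2}, that this $D$ is a derivation. By hypothesis $D$ is continuous; composing it with the isometric embedding $a\mapsto(a,0)$ and the continuous coordinate projection $(a,u)\mapsto a$ recovers $\delta$, which is therefore continuous. This half is purely formal once the derivation $D$ has been produced, so no real difficulty arises in (ii).
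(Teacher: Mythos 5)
Your proposal is correct and follows essentially the same route as the paper, which obtains both parts precisely by specializing Remarks \ref{r1} and \ref{r2} to $\mathcal{I}=(0)$: part (i) via the continuity of $\delta_1,\delta_2$ and of the $\A$-bimodule homomorphism $\tau_1$ coming from Proposition \ref{asli}, and part (ii) by building $D((a,u))=(\delta(a),\delta(u))$ and reading $\delta$ off from its continuity. The only caveat, inherited from Remark \ref{r1} itself, is that the approximate identity should be taken \emph{bounded} for the automatic-continuity step for $\tau_1$ (and likewise for the analogous intertwining map $\tau_2$), exactly as the paper's remark states.
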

\begin{prop}\label{trs}
Let $\A$ be a semisimple Banach algebra which has a bounded approximate identity and $\U$ be a Banach $\A$-bimodule with $ann_{\A}\U=(0)$. Suppose that there exists a surjective left $\A$-module homomorphism $\phi:\A\rightarrow\U$ and every derivation from $\A$ into $\U$ is continuous, then every derivation on $T(\A,\U)$ is continuous.
\end{prop}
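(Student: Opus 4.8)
The plan is to use Proposition \ref{asli}. Write a derivation $D$ on $T(\A,\U)$ as $D((a,u))=(\delta_1(a)+\tau_1(u),\delta_2(a)+\tau_2(u))$; then $D$ is continuous if and only if each of $\delta_1,\delta_2,\tau_1,\tau_2$ is continuous. Two of these are immediate: $\delta_1$ is a derivation on the semisimple Banach algebra $\A$, hence continuous by the Johnson--Sinclair theorem, and $\delta_2$ is a derivation from $\A$ into $\U$, hence continuous by hypothesis. So it remains to prove that $\tau_1$ and $\tau_2$ are continuous.

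For $\tau_1:\U\to\A$ I would argue on its separating space $\mathfrak{S}(\tau_1)\subseteq\A$. If $u_n\to0$ in $\U$ and $\tau_1(u_n)\to c$ in $\A$, then fixing $v\in\U$ and letting $n\to\infty$ in the relations $u_n\tau_1(v)+\tau_1(u_n)v=0$ and $v\tau_1(u_n)+\tau_1(v)u_n=0$ (continuity of the module actions) gives $cv=0$ and $vc=0$; as $v$ is arbitrary, $c\in\ann_{\A}\U=(0)$. Thus $\mathfrak{S}(\tau_1)=(0)$ and $\tau_1$ is continuous by the closed graph theorem. (Only $\ann_{\A}\U=(0)$ is used here.)

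The substantive part is the continuity of $\tau_2:\U\to\U$, which satisfies merely $\tau_2(au)=a\tau_2(u)+\delta_1(a)u$ and $\tau_2(ua)=\tau_2(u)a+u\delta_1(a)$; these are strictly weaker than the derivation law, so $\tau_2$ need not be continuous for formal reasons, and this is exactly where the remaining hypotheses are used. I would transport the problem to $\A$ through $\phi$ (which we take bounded): set $\sigma:=\tau_2\circ\phi:\A\to\U$, so that, $\phi$ being a left $\A$-module homomorphism, $\sigma(ab)=a\sigma(b)+\delta_1(a)\phi(b)$. Since $\phi$ is a bounded surjection, the open mapping theorem reduces the continuity of $\tau_2$ to that of $\sigma$, and also gives $\mathfrak{S}(\sigma)=\mathfrak{S}(\tau_2)$; moreover the two identities above show that $\mathfrak{S}(\tau_2)$ is a closed $\A$-sub-bimodule of $\U$. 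It then remains to prove $\mathfrak{S}(\tau_2)=(0)$, which I expect to be the main obstacle. The plan here is a Johnson--Sinclair style argument: the bounded approximate identity of $\A$ gives, via Cohen's factorization theorem, that $\U=\A\U$ and permits the use of the stability lemma along sequences of left multiplications on $\U$ by elements of $\A$; passing to the primitive quotients of the semisimple algebra $\A$ (where the induced maps are continuous by Sinclair's theorem on primitive Banach algebras) and invoking $\ann_{\A}\U=(0)$ together with the surjectivity of $\phi$ then forces $\mathfrak{S}(\tau_2)=(0)$. With $\tau_1$ and $\tau_2$ continuous, and $\delta_1,\delta_2$ continuous as above, $D$ is continuous.
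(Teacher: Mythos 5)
Your reduction to the four components, and the treatment of $\delta_1$ (continuous by Johnson--Sinclair since $\A$ is semisimple) and $\delta_2$ (continuous by hypothesis), are correct and agree with the paper. Your argument for $\tau_1$ is also correct and is in fact cleaner than the paper's: the paper obtains continuity of $\tau_1$ from the bounded approximate identity via Remark \ref{r1}, whereas you get it directly from the relation $u\tau_1(v)+\tau_1(u)v=0$ together with $\ann_{\A}\U=(0)$ and the closed graph theorem, without using $\phi$ or the approximate identity at all.

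The genuine gap is exactly where you predicted it: the continuity of $\tau_2$ is announced but not proved. What you offer there is a plan rather than an argument --- ``primitive quotients'', ``Sinclair's theorem'' and ``the stability lemma'' are named but never set up, and it is unclear how they would apply, since $\sigma=\tau_2\circ\phi$ is not a derivation and $\U$ is not an algebra to which semisimplicity could be transferred; you also \emph{assume} $\phi$ is bounded, which is not among the hypotheses. Both issues have an elementary resolution that your own identity puts within one line's reach. First, $\phi$ is automatically continuous: if $a_n\to0$ in $\A$, Cohen's factorization theorem (using the bounded \emph{right} approximate identity) gives $a_n=b_nc$ with $b_n\to0$ and $c$ fixed, whence $\phi(a_n)=b_n\phi(c)\to0$; being a continuous surjection, $\phi$ is then open. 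Second, from $\sigma(ab)=a\sigma(b)+\delta_1(a)\phi(b)$ and $\phi(\delta_1(a)b)=\delta_1(a)\phi(b)$ one checks that $\Lambda:=\tau_2\circ\phi-\phi\circ\delta_1$ satisfies $\Lambda(ab)=a\Lambda(b)$, i.e.\ it is a left $\A$-module homomorphism from $\A$ into $\U$, hence continuous by the same Cohen factorization argument ($\Lambda(b_nc)=b_n\Lambda(c)\to0$). Therefore $\tau_2\circ\phi=\Lambda+\phi\circ\delta_1$ is continuous, and lifting null sequences through the open map $\phi$ shows $\tau_2$ is continuous --- no separating space, no semisimplicity, and no $\ann_{\A}\U=(0)$ are needed for this step. (For what it is worth, the paper's own proof is a two-line reference to Proposition \ref{asli} and Remark \ref{r1} and is equally silent on $\tau_2$; you have correctly located where the actual content of the proposition lies, but you have not supplied it.)
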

\begin{proof}
Let $D$ be a derivation on $T(\A,\U)$ which has a structure as in Proposition \ref{asli}. Since $\A$ is semisimple, every derivation from $\A$ into $\A$ is continuous. Now by Proposition \ref{asli} and Remark \ref{r1}, it follows that every derivation on $T(\A,\U)$ is continuous.
\end{proof}
Ringrose \cite{rin} proved that every derivation from a $C^{*}$-algebra into a Banach bimodule is continuous. According to this result we give the following example satisfying the conditions of the above proposition.
\begin{exm}
Let $\A$ be a $C^{*}$-algebra and $\U$ be a Banach $\A$-bimodule with $ann_{\A}\U=(0)$. Suppose that there exists a surjective left $\A$-module homomorphism $\phi:\A\rightarrow\U$. Hence $\A$ is a semisimple Banach algebra with a bounded approximate identity. So by \cite{rin} and Proposition \ref{trs}, any derivation on $T(\A,\U)$ is continuous. 
\end{exm}

An non-trivial element $p$ in an algebra $\A$ is called an \textit{idempotent} if $p^{2}=p$.
\begin{prop}
Let $\A$ be a prime Banach algebra with a non-trivial idempotent
$p$ (i.e. $p \neq 0$) such that $\A p$ is finite dimensional. Then every derivation
on $\A$ is continuous.
\end{prop}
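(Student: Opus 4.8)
The plan is to use the finite-dimensional corner $p\A p$ to locate a minimal idempotent of $\A$, to deduce from primeness that $\A$ is semisimple, and then to quote the Johnson--Sinclair theorem.

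First I would observe that $p\A p\subseteq\A p$ is finite-dimensional and that it is a subalgebra of $\A$ with identity $p$; it is nonzero since $p\neq 0$. Because $\A$ is prime, $p\A p$ is prime as well: if $a\,(p\A p)\,b=(0)$ with $a,b\in p\A p$, then writing $a=pap$, $b=pbp$ and using the identity $pap\,x\,pbp=pa\,(pxp)\,bp$ one gets $a\A b=(0)$, whence $a=0$ or $b=0$. A finite-dimensional prime algebra over $\mathbb C$ is, by the Wedderburn theory, isomorphic to $M_n(\mathbb C)$ for some $n\geq 1$; in particular $p\A p$ contains a rank-one idempotent $q$ with $q\,(p\A p)\,q=\mathbb C q$.

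Next I would check that $q$ is a minimal idempotent of the whole algebra $\A$. From $q=pqp$ we get $qp=pq=q$, hence $qxq=q\,(pxp)\,q$ for every $x\in\A$, so $q\A q=q\,(p\A p)\,q=\mathbb C q$. Since $\A$ is prime (hence semiprime), this forces $\A q$ to be a minimal left ideal of $\A$. Therefore $\A$ has nonzero socle, and a prime Banach algebra with nonzero socle is semisimple: the Jacobson radical annihilates the socle (it annihilates every semisimple module), so the product of these two two-sided ideals is $(0)$, and primeness together with the socle being nonzero forces the radical to vanish.

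Finally, $\A$ being a semisimple Banach algebra, the Johnson--Sinclair theorem \cite{john1} yields that every derivation on $\A$ is continuous, as desired. The crux of the argument is the reduction to the semisimple case, that is, the corner bookkeeping showing that a minimal idempotent of $p\A p$ remains minimal in $\A$, and the observation that primeness upgrades ``minimal left ideal'' to ``semisimplicity''; once that is in place the statement follows from a classical theorem. (Alternatively, one could invoke directly the automatic continuity results for prime Banach algebras with nonzero socle of Villena \cite{vi1, vi2}, but the route through semisimplicity seems the most economical.)
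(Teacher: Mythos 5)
Your argument is correct, but it takes a genuinely different route from the paper. You show that the hypotheses already force $\A$ to be semisimple: $p\A p$ is a finite-dimensional prime unital algebra, hence isomorphic to $M_n(\mathbb{C})$, which produces an idempotent $q$ with $pq=qp=q$ and $q\A q=\mathbb{C}q$; since $\A$ is semiprime, $\A q$ is a minimal left ideal, so the socle is nonzero, and as the radical annihilates the socle, primeness forces the radical to vanish. Continuity of every derivation then follows from the Johnson--Sinclair theorem \cite{john1}. Each intermediate step (primeness of the corner $p\A p$, minimality of $q$ in $\A$, the radical killing the socle) is standard and you verify or correctly invoke it, so the proof stands. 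The paper instead argues inside its module-extension framework: it takes $\U=\A p$ (finite dimensional, hence closed) with the usual left action and the zero right action, notes $\ann_{\A}\U=(0)$ by primeness, and attaches to a derivation $\delta$ on $\A$ the map $\tau(x)=\delta(x)p$, which is automatically continuous because $\U$ is finite dimensional; then $D((a,x))=(\delta(a),\tau(x))$ is a derivation on $T(\A,\U)$, and the continuity of $D$ (hence of $\delta$) is inferred from the paper's analysis of derivations on module extensions. Your route buys independence from that machinery and in fact shows the proposition is a special case of the classical semisimple result; it also bypasses the paper's least explicit step, namely the appeal to Proposition \ref{asli} for the continuity of $D$, which that proposition alone does not furnish. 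The paper's route, in turn, buys a proof in the spirit of the article, illustrating how automatic continuity on $\A$ transfers to the trivial extension $T(\A,\A p)$, with the finite-dimensionality of $\A p$ used only to make $\tau$ continuous rather than to produce a minimal idempotent.
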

\begin{proof}
Let $\U=\A p$. Then $\U$ is a closed left ideal in $\A$. By the following make $\U$ into a Banach
$\A$-bimodule:
\[ xa=0 \quad \quad (x\in \A p, a\in \A), \]
and the left multiplication is the usual multiplication of $\A$. So we can consider $T(\A,\U)$ in this case. Since $\A$ is prime, it follows that $ann_{\A}\U=(0)$. Let $\delta:\A \rightarrow \A$ be a derivation. Define the mapping $\tau:\U \rightarrow \U$ by $\tau(ap)=\delta(ap)p$ $(a\in \A)$. The mapping $\tau$ is well-defined and linear. Also
\[\tau (ax)=a\tau (x)+\delta (a)x\quad \text{and} \quad \tau (xa)=\tau (x)a+x\delta(a)\quad \quad (a\in\A,x\in\U).\] 
Since $\U$ is finite dimensional, it follows that $\tau$ is continuous. By Proposition \ref{asli}, the mapping $D:T(\A,\U)\rightarrow T(\A,\U)$ defined by $D((a,x))=(\delta(a),\tau(x))$ is a derivation. Now for $D$, the conditions of Proposition \ref{asli} are all satisfied and so $D$ is continuous. Therefore $\delta$ is continuous.
\end{proof}
\bibliographystyle{plain}

\end{document}